\documentclass[12pt]{article}
\usepackage[margin=1in]{geometry}
\usepackage{amsmath,amssymb,amsthm,comment}
\usepackage{xcolor}
\usepackage[hidelinks]{hyperref}

\numberwithin{equation}{section}
\newtheorem{theorem}{Theorem}[section]
\newtheorem{conjecture}[theorem]{Conjecture}
\newtheorem{proposition}[theorem]{Proposition}
\newtheorem{lemma}[theorem]{Lemma}
\theoremstyle{definition}
\newtheorem{definition}[theorem]{Definition}
\newtheorem{remark}[theorem]{Remark}

\newcommand{\C}{\mathbb C}
\newcommand{\B}{\mathbb B}
\newcommand{\PP}{\mathbb P}
\newcommand{\cO}{\mathcal O_{\mathrm{reg}}}
\newcommand{\ii}{\sqrt{-1}}
\newcommand{\ord}{\operatorname{ord}}
\newcommand{\rank}{\operatorname{rank}}
\newcommand{\pr}{\operatorname{pr}}

\title{\Large Componentwise rigidity of holomorphic isometric maps\\
from the complex unit ball to bounded symmetric domains}
\author{}
\date{}

\begin{document}

\author{Ming Xiao\thanks{The author is supported in part by the NSF grants DMS-2045104 and DMS-2554635.}}
\date{}	

\maketitle

\begin{abstract}
We study  holomorphic isometric maps $F=(F_1,\cdots,F_m)$ from the complex unit ball $\B^n$, $n\geq2$, to a product of irreducible bounded symmetric domains $\Omega_1\times\cdots\times\Omega_m,$ equipped with positive constant multiples of their canonical K\"ahler--Einstein metrics. We prove that every nonconstant component $F_i$ is itself a holomorphic isometry up to a constant factor. This proves the componentwise rigidity conjecture formulated by Yuan in 2019. 
\end{abstract}

\noindent\textbf{2020 Mathematics Subject Classification:}
Primary 32H02; Secondary 32M15.

\noindent\textbf{Key Words:} Holomorphic isometric maps; bounded
symmetric domains; complex unit ball.


\section{Introduction}\label{sec:introduction}

The study of holomorphic isometric maps goes back to the classical work of Calabi \cite{Ca}. Holomorphic isometries between bounded
symmetric domains attracted particular attention due to their connection with arithmetic geometry, especially with modular
correspondences; see Clozel--Ullmo \cite{CU}. Mok \cite{Mok2002,Mok2012,Mok2016} initiated a systematic study
of local holomorphic isometric maps between bounded symmetric domains, which has stimulated extensive further research.

Let $D_1$ and $D_2$ be bounded symmetric domains, and write $ds_D^2$ for the Bergman metric of a bounded symmetric domain $D$. Suppose
that $F:V\to D_2$ is a holomorphic map from an open connected subset $V\subseteq D_1$ satisfying
$$
F^*(ds_{D_2}^2)=\lambda ds_{D_1}^2 \quad\hbox{on }V,
$$
for some positive constant $\lambda$. In his groundbreaking work \cite{Mok2012}, Mok proved that $F$ is algebraic and extends to a
holomorphic proper and isometric immersion from $D_1$ into $D_2$. In the same paper, Mok also proved that $F$ must be totally geodesic
when $D_1$ is irreducible and has rank at least two.

Much less is known when the source domain $D_1$ has rank one, namely when it is a complex unit ball $\B^n$. In this case, the rigidity problem
has a very different character. In the one-dimensional case (i.e., when $n=1$), the problem of holomorphic isometric maps from the Poincar\'e disk
into polydisks has been intensively studied by many authors, beginning with the work of Clozel--Ullmo \cite{CU} and
Mok \cite{Mok2012}. In particular, Mok \cite{Mok2012} constructed non-totally geodesic holomorphic isometric maps from the Poincar\'e disk into polydisks, nowadays
called Mok's $p$-th root maps. For further investigations of this problem and related questions, we refer the readers to
Ng \cite{Ng1}, Chan \cite{Ch1,Ch2}, Chan--Yuan \cite{CY}, Chan--Xiao--Yuan \cite{CXY2017}, and references therein.


In this paper, we concentrate on the case where the source domain is a ball of complex dimension at least two, i.e., $D_1=\B^n$
with $n\geq2$. In this setting, it is natural to first investigate holomorphic isometric maps from $\B^n$ into products of complex
unit balls. This problem was studied by Mok \cite{Mok2002} and Ng \cite{Ng2}, and subsequently by Yuan--Zhang \cite{YZ}. Let
$F=(F_1,\ldots,F_m)$ be a holomorphic map from an open connected subset $V \subseteq \B^n$ into $\B^{N_1}\times\cdots\times\B^{N_m}$ satisfying
$$
ds_{\B^n}^2=\sum_{i=1}^m\lambda_iF_i^*(ds_{\B^{N_i}}^2)
\quad\text{on }V,
$$
where the $\lambda_i$ are positive constants. Yuan--Zhang
\cite{YZ} proved that every nonconstant component $F_i$ extends
to a totally geodesic map from $\B^n$ into $\B^{N_i}$.

On the other hand, when the target $D_2$ has an irreducible factor of rank at least two, rigidity in the form of total geodesy fails
dramatically. Indeed, Mok \cite{Mok2016} constructed non-totally geodesic holomorphic isometric embeddings from complex
unit balls into higher-rank irreducible bounded symmetric domains of sufficiently large dimension. See also subsequent developments
in the works of Chan--Mok \cite{CM}, Upmeier--Wang--Zhang \cite{UWZ}, Xiao--Yuan \cite{XY1,XY2}, and Chan \cite{Ch3},
among others.

Although total geodesy fails for general higher-rank targets, it has been believed that a weaker rigidity property should still
hold when the target is reducible. An explicit conjecture in this direction was formulated by Yuan
\cite[Problem~5.2]{Yuan2019} and has motivated subsequent research. To state the conjecture,
we first fix the notation and metric normalization. All bounded symmetric domains are taken in their Harish--Chandra realizations.
For an irreducible bounded symmetric domain $D$, we write $g_D$ for its canonical K\"ahler--Einstein metric normalized so that minimal
disks have constant Gaussian curvature $-2$, and $\omega_D$ for the corresponding K\"ahler form. 

\begin{conjecture}[{Yuan, \cite[Problem~5.2]{Yuan2019}}]
\label{conj1}
Let $V \subseteq \B^n$, $n\geq2$, be a nonempty open connected set,
and let $\Omega_i\subseteq \C^{N_i}$, $1\leq i\leq m$, be irreducible
bounded symmetric domains. Suppose that $\lambda_i$ are positive constants, and that
$F=(F_1,\cdots,F_m):V \rightarrow \Omega_1\times\cdots\times\Omega_m$
is a holomorphic map satisfying
\begin{equation}\label{eq:metric-identity}
	g_{\B^n}=\sum_{i=1}^m\lambda_i F_i^*g_{\Omega_i}
	\quad\hbox{on }V.
\end{equation}
Then every nonconstant component $F_i$ extends to a proper
holomorphic map from $\B^n$ to $\Omega_i$, still denoted by $F_i$,
such that, for some positive integer $k_i$,
\begin{equation}\label{eq:component-isometry}
	F_i^*g_{\Omega_i}=k_i g_{\B^n} \quad\hbox{on }\B^n.
\end{equation}
Moreover,
$\sum_{i=1}^m\lambda_i k_i=1$,
where $k_i=0$ for constant components.
\end{conjecture}

The assumption $n\geq2$ is essential in Conjecture~\ref{conj1}, as Mok's $p$-th root maps from the Poincar\'e disk into polydisks
\cite{Mok2012} serve as counterexamples when $n=1$. We also recall that, by Mok \cite[Theorem~2.1.2]{Mok2012} and Chan--Xiao--Yuan
\cite[Theorem~4.25]{CXY2017}, a map satisfying the assumptions of Conjecture~\ref{conj1} is algebraic and extends to a proper holomorphic map from the whole ball to the target
product,  with \eqref{eq:metric-identity} holding on $\B^n.$  We may thus assume from the outset that $V=\B^n$.

Only partial results towards the conjecture were obtained in several settings. In particular, Yuan-Zhang \cite{YZ} established the stronger rigidity of total geodesy when every $\Omega_i$ is a unit ball. In \cite{Xiao2022}, Conjecture~\ref{conj1} was established when $n\geq4$ and the target factors are complex unit balls or Lie balls.
The purpose of the present paper is to give an affirmative answer to the conjecture in its full generality.

\begin{theorem}\label{thm:main}
Conjecture~\ref{conj1} holds.
\end{theorem}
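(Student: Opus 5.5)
We work from the functional identity behind \eqref{eq:metric-identity}. For an irreducible bounded symmetric domain $\Omega\subseteq\C^N$ in Harish--Chandra realization, the K\"ahler form of $g_\Omega$ has the potential $-\ii\partial\bar\partial\log h_\Omega(z,\bar z)$. Here $h_\Omega$ is the generic norm, a real polynomial with $h_\Omega(0,\cdot)=1$, and the normalization (minimal disks of curvature $-2$) is the one built into $g_\Omega$. Since $\B^n$ is simply connected and $F$ is already extended to all of $\B^n$, \eqref{eq:metric-identity} becomes $\partial\bar\partial\log\bigl[(1-|z|^2)^{-1}\prod_i h_{\Omega_i}(F_i,\bar F_i)^{\lambda_i}\bigr]=0$. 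Composing the components with automorphisms of the $\Omega_i$ gives $F(0)=0$, and polarizing then yields
\begin{equation*}
\prod_{i=1}^m h_{\Omega_i}\bigl(F_i(z),\overline{F_i(w)}\bigr)^{\lambda_i}=1-\langle z,w\rangle
\end{equation*}
on a neighborhood of the diagonal. The conjecture asks us to show that each nonconstant factor satisfies $h_{\Omega_i}(F_i(z),\overline{F_i(w)})=(1-\langle z,w\rangle)^{k_i}$ with $k_i\in\mathbb Z_{>0}$. Once that holds, comparing exponents in the product gives $\sum\lambda_ik_i=1$. Properness of $F_i$ also follows: $h_{\Omega_i}(F_i,\bar F_i)\to0$ at $\partial\B^n$, so $F_i$ tends to $\partial\Omega_i$.

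The core is a local analysis at a generic boundary point, using the algebraic extension of Mok and Chan--Xiao--Yuan. Fix a generic $p\in\partial\B^n$ where every $F_i$ is holomorphic across $p$. Then $F_i(p)$ lies in a boundary stratum of $\Omega_i$ of some rank $r_i$. Near $F_i(p)$ we use the local factorization of the generic norm along the stratum: locally $h_{\Omega_i}=\prod_{j=1}^{r_i'}\mu_{ij}$, where the $\mu_{ij}$ are the "eigenvalue" functions. Equivalently, we apply an automorphism sending $F_i(p)$ to a standard form and take determinants of the corresponding Jordan-algebra block. The identity then says that $1-|z|^2$ equals a product of real-analytic pieces raised to the powers $\lambda_i$. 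Next we restrict to the sphere $\partial\B^n$ near $p$. Because $n\geq2$, the sphere is Levi nondegenerate of CR dimension $n-1\geq1$, so the factors vanishing on it must each vanish there to a definite order. The local defining-function uniqueness for the sphere (a Hopf-lemma/CR argument) should force each $h_{\Omega_i}(F_i,\bar F_i)$ to vanish on $\partial\B^n$ exactly to an integer order $k_i$, with $h_{\Omega_i}(F_i,\bar F_i)=(1-|z|^2)^{k_i}\,e^{\phi_i}$ for a pluriharmonic $\phi_i$. The integrality of $k_i$ comes from counting how many eigenvalue functions vanish along the image, i.e.\ the rank of the boundary stratum hit.

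The main obstacle is the step that kills $\phi_i$: showing that no single component can absorb a pluriharmonic error compensated by the others. Here $n\geq2$ is essential, since Mok's $p$-th root maps show the step fails for $n=1$. My first approach is to polarize $h_{\Omega_i}(F_i(z),\overline{F_i(w)})(1-\langle z,w\rangle)^{-k_i}=e^{\phi_i}$. This writes $e^{\phi_i}$ as $g_i(z)\overline{g_i(w)}$ with $g_i$ holomorphic and algebraic. I would then show that $g_i$ extends holomorphically and nowhere vanishing to a neighborhood of $\overline{\B^n}$, by running the boundary analysis at every generic boundary point and using algebraicity to handle the exceptional branch locus. That locus has codimension $\geq2$ in the sphere when $n\geq2$, so Hartogs-type extension applies. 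Evaluating at $w=0$ gives $g_i(z)\overline{g_i(0)}=h(F_i(z),0)=1$, so $g_i$ is constant and $\phi_i\equiv0$. If the codimension claim fails, the fallback is the Segre-family/reflection argument from \cite{Xiao2022}, now carried out with the Jordan-theoretic generic norm for all types rather than only balls and Lie balls.
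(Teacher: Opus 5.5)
Your proposal has a genuine gap, and it sits exactly where the content of the theorem lies. Applying $-\ii\partial\bar\partial\log$ to the claim $h_{\Omega_i}(F_i,\overline{F_i})=(1-|z|^2)^{k_i}e^{\phi_i}$ with $\phi_i$ pluriharmonic gives precisely the conclusion $F_i^*\omega_{\Omega_i}=k_i\omega_{\B^n}$. Your justification (Levi nondegeneracy, a Hopf lemma, uniqueness of defining functions at a generic $p\in\partial\B^n$) yields at most $h_{\Omega_i}(F_i,\overline{F_i})=(1-|z|^2)^{k_i}u_i$ with $u_i>0$ real-analytic near $p$. It says nothing about $\partial\bar\partial\log u_i$.

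No such local principle holds. The Whitney map $G(z)=(z_1,\ldots,z_{n-1},z_nz)$ sends $\partial\B^n$ into $\partial\B^{2n-1}$ and satisfies $1-|G|^2=(1-|z|^2)(1+|z_n|^2)$. This vanishes to order one on the sphere, yet its quotient has non-pluriharmonic logarithm. Similarly, for the first component of Mok's square-root map, $\operatorname{Im}F_1/\operatorname{Im}\tau=1/(2\operatorname{Re}\sqrt{\tau})$ near $\tau=1$. Whatever forces pluriharmonicity must use \eqref{eq:metric-identity} globally, whereas you use it only to say that the factors vanish on the sphere.

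Even that use needs an argument. The product identity forces only \emph{some} factor to vanish at $p$. That every $F_i$ maps a piece of $\partial\B^n$ into $\partial\Omega_i$ is \cite[Theorem~1.1(a)]{Xiao2022}, which needs $n\geq2$; it fails for the second component of the square-root map.

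Conversely, the step you call the main obstacle is empty. If $\partial\bar\partial\log u_i=0$ near $p$, real-analyticity propagates this to all of $\B^n$, which is already the isometry statement. Your own evaluation at $w=0$ (using $h_{\Omega_i}(F_i(z),0)=1$) then makes $g_i$ constant at once, with no boundary extension, Hartogs argument, or codimension count.

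The missing ingredient is global, and the paper supplies it as follows. It complexifies to $R_i(z,\zeta)=\rho_i(F_i(z),F_i^\#(\zeta))$ and $\Delta=1-z\cdot\zeta$. It lifts both to $X\times Y$, where $X$ and $Y$ are affine algebraic covers on which $F$ and $F^\#$ become single-valued. The lifted complexified sphere $\Sigma=\{\widetilde\Delta=0\}$ is irreducible by Bertini's theorem; this is the paper's key use of $n\geq2$. Hence vanishing of $\widetilde R_i$ near a single boundary point propagates to all of $\Sigma$.

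Residues of the lifted identity $d\widetilde\Delta/\widetilde\Delta=\sum_i\lambda_i\,d\widetilde R_i/\widetilde R_i$ along transverse disks, together with $\lambda_i>0$, exclude any other zero divisor of $\widetilde R_i$. So $\widetilde R_i/\widetilde\Delta^{k_i}$ is a regular unit on $X\times Y$. Rosenlicht's product-unit theorem then splits it as $a(x)b(y)$, which is exactly the splitting $g_i(z)g_i^\#(\zeta)$ you were after.

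The $n=1$ obstruction is invisible at $(p,\bar p)$. In the square-root example, $\widetilde R_1/\widetilde\Delta=1/(s+t)$ is a unit near $s=t=1$ but has a pole along the other branch $\{s=-t\}$ of $\Sigma$. Finally, your fallback of running the Segre-family argument of \cite{Xiao2022} for all types is a program rather than a proof, since that paper covers only $n\geq4$ with ball or Lie-ball targets.
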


By Chan--Mok \cite[Lemma~3]{CM}, the factor $k_i$ of
each nonconstant component satisfies
\begin{equation}\label{eq:ki-rank-bound}
	1\leq k_i\leq \rank(\Omega_i).
\end{equation}
Here we have expressed their conclusion in the metric normalization used in the present paper.
We briefly outline our proof, which differs substantially from those in previous works. While both \cite{YZ} and \cite{Xiao2022} use methods from CR
geometry, our proof combines complex analytic and complex algebraic viewpoints. The most crucial idea is to lift the problem to suitable algebraic covers.  After normalizing $F(0)=0$, the metric identity yields a differential form identity involving the generic norms of the target factors. Algebraicity of $F$ allows us to realize the components of $F$
and their polarized conjugates as single-valued regular functions on two independent algebraic covers $X$ and $Y$. We then lift the differential form identity to $X \times Y$.
The next key step is to show that the inverse image $\Sigma$ of the polarized sphere $1-z\cdot\zeta=0$ in $X\times Y$ is an irreducible hypersurface. We prove this using Bertini's irreducibility theorem for general hyperplane sections; this is where the assumption $n\geq2$ enters the argument. The boundary behavior established in
\cite[Theorem~1.1(a)]{Xiao2022} then shows that the lift of each polarized generic norm vanishes along $\Sigma$.  The lifted differential form identity, together with residue calculations on transverse holomorphic disks, shows that the quotient of each polarized generic norm by the appropriate power of $1-z\cdot\zeta$ lifts to a regular unit on
$X\times Y$. Rosenlicht's product-unit theorem, in the form proved by Conrad \cite{ConradUnits}, implies each such unit must be a product of two regular units, one depending only on $x\in X$ and the other only on $y\in Y$. This factorization finally yields the componentwise rigidity conclusion. To make our proof more transparent to readers, we use Mok's square-root map \cite{Mok2012} at the end of Section~\ref{sec:incidence} to illustrate why our argument fails when $n=1$ (see Remark~\ref{rem:square-root}).

Besides the works mentioned above, for more studies of metric-preserving mappings and mappings preserving invariant $(p,p)$-forms between bounded symmetric domains, we refer readers
to \cite{MN,HY,Y1,Yang2017,Mok2026,DY} and the references therein. This list is by no means exhaustive. See also the survey papers \cite{Mok2011,Mok2018,Yuan2019} for accounts of related developments
at different stages. The paper is organized as follows. In Section~\ref{sec:section1}, we recall the necessary preliminaries and construct the algebraic covers
to which we lift the differential form identity under consideration. In Section~\ref{sec:incidence}, we establish several preparatory results for the proof. Finally, in Section~\ref{sec:product-unit}, we complete the proof of Theorem~\ref{thm:main}.

\medskip

{\bf Acknowledgments.} The author thanks Zhiyuan Jiang and Junyi Xie for their patience in answering his questions about algebraic geometry, including some rather elementary ones, and for pointing him to relevant references during the preparation of this work.

\medskip

{\bf Use of AI tools.}  OpenAI's ChatGPT assisted in simplifying the proof of Lemma~\ref{lem:RC} and subsequently pointed out that the lemma is
a special case of Rosenlicht's product-unit theorem. (We keep the proof for completeness though.) The author takes full responsibility for the correctness and content of the manuscript.



\section{Preliminaries and lifting to algebraic covers}
\label{sec:section1}
Assume the hypotheses of Conjecture~\ref{conj1}, and let $F=(F_1,\cdots,F_m)$ be a holomorphic map satisfying those hypotheses. We
discard the constant components and keep the same notation for the remaining components. As mentioned in $\S$\ref{sec:introduction}, by
\cite[Theorem~2.1.2]{Mok2012} and  \cite[Theorem~4.25]{CXY2017}, we may assume that $F$ is defined on the whole ball
$\B^n$ and is algebraic. Here algebraicity means that every coordinate function $f$ of $F$ satisfies $P(z,f(z))=0$ for some nonzero polynomial $P(z,T)$ of positive degree in $T$.

\subsection{A differential form identity}\label{subsec: subsec21}

Composing each $F_i$ with an automorphism of $\Omega_i$, we may assume that
\begin{equation}\label{eq:normalization}
F_i(0)=0,\qquad 1\leq i\leq m.
\end{equation}
For $1 \leq i \leq m,$ let $\rho_i(Z_i,\overline{Z_i})$ denote the generic norm of $\Omega_i\subseteq\C^{N_i}$. We recall its standard properties from
\cite{Loos1977} and \cite[Section~2.1]{CM}; see also the brief discussion in \cite[Section~1]{Xiao2022}. With the usual normalization, $\rho_i(Z_i,\overline{Z_i})$ is a real-valued polynomial satisfying

\begin{equation}\label{eq:generic-norm-boundary}
\rho_i(Z_i,\overline{Z_i})>0
\quad\text{for } Z_i\in\Omega_i; \qquad ~~	\rho_i(Z_i,\overline{Z_i})=0
	\quad\text{for } Z_i\in\partial\Omega_i.
\end{equation}
Moreover,
\begin{equation}\label{eq:target-potential}
	\omega_{\Omega_i}
	=-\ii\,\partial\bar\partial
	\log\rho_i(Z_i,\overline{Z_i}).
\end{equation}
Polarizing the generic norm, we obtain a polynomial $\rho_i(Z_i,\Xi_i)$ in the independent complex variables
$Z_i,\Xi_i\in\C^{N_i},$ which agrees with $\rho_i(Z_i,\overline{Z_i})$ on the diagonal $\Xi_i=\overline{Z_i}.$
By the property of the generic norm,
\begin{equation}\label{eq:generic-norm-axes}
	\rho_i(Z_i,0)=\rho_i(0,\Xi_i)=1.
\end{equation}
For a holomorphic function $q$ on an open set $O\subseteq\C^n$, we define its conjugate holomorphic function by
\[
q^\#(\zeta):=\overline{q(\overline{\zeta})},
\qquad
\zeta\in O^\#:=\{\overline z:z\in O\}.
\]
The same notation will be used componentwise for holomorphic maps. In particular, $F_i^\#$ denotes the conjugate map of $F_i$, and is holomorphic on $(\B^n)^\#=\B^n.$ For $z=(z_1, \cdots, z_n)$ and $\zeta=(\zeta_1, \cdots, \zeta_n)$ in $\B^n$, set
\begin{equation}\label{eq:Delta-R}
\Delta(z,\zeta):=1-\sum_{\alpha=1}^n z_\alpha\zeta_\alpha,
\qquad
R_i(z,\zeta):=\rho_i\bigl(F_i(z),F_i^\#(\zeta)\bigr).
\end{equation}
By \eqref{eq:normalization}, the potential identity in \cite[Equation~(2.1)]{Xiao2022} and polarization give
\begin{equation}\label{eq:local-product}
\Delta(z,\zeta)=\prod_{i=1}^m R_i(z,\zeta)^{\lambda_i}
\end{equation}
near $(0,0)$. Note here $R_i(0,0)=1$, and a nonintegral power is defined using the local holomorphic logarithm of $R_i$ which vanishes at
$(0,0)$. Fix a sufficiently small Euclidean ball $W_0$ centered at $0$ (so that $W_0^\#=W_0$), with $\overline{W_0}\subset\B^n$, so that all these logarithms
are defined and \eqref{eq:local-product} holds on $W_0\times W_0$. Differentiating in the independent variables $(z, \zeta)$ gives
\begin{equation}\label{eq:log-differential-base}
\frac{d\Delta}{\Delta}
=\sum_{i=1}^m\lambda_i\frac{dR_i}{R_i}, \quad \quad\hbox{on }W_0 \times W_0.
\end{equation}

\subsection{The algebraic covers}\label{subsec: subsec22}

Recall $F$ is algebraic. Let $\mathcal X$ be the irreducible algebraic variety obtained by taking the Zariski closure of the graph of $F$ in
$\C^n\times\C^{N_1+\cdots+N_m}$, and let
$$
\pi:\mathcal X\rightarrow\C^n
$$
be the projection onto the first factor. Since $F$ is algebraic, there exists a nonzero polynomial $h\in\C[z_1,\cdots,z_n]$ such that, writing
\begin{equation}\label{eq:A-U}
	A:=\{z\in\C^n:h(z)=0\},\qquad U:=\C^n\setminus A,
\end{equation}
the restriction
$$
\pi:\pi^{-1}(U)\rightarrow U
$$
is a finite-sheeted holomorphic covering map. In particular, $\pi^{-1}(U)$ is smooth and has complex
dimension $n$. Since $\pi^{-1}(U)$ is a Zariski-open subset of the irreducible algebraic variety $\mathcal X$, it is also connected.

We next realize $\pi^{-1}(U)$ as a closed affine algebraic variety. To this end, introduce an additional complex coordinate $\mu \in\C$ and set
\begin{equation}\label{eq:X-affine}
	X:=\{(z,w,\mu )\in\mathcal X\times\C:h(z) \mu =1\}.
\end{equation}
Since $\mathcal X$ is defined by polynomial equations in $(z,w)$,
these equations together with $h(z)\mu =1$ show that $X$ is an algebraic variety in
$\C^n\times\C^{N_1+\cdots+N_m}\times\C$. The map
$$
(z,w)\longmapsto\left(z,w,\frac{1}{h(z)}\right)
$$
gives a biholomorphism (and indeed an algebraic isomorphism; see Definition \ref{defn:regular}) from $\pi^{-1}(U)$ onto $X$. Consequently, $X$ is a smooth connected affine algebraic variety of complex dimension $n$, and hence is irreducible. Moreover, the projection
$$
\pi_X:X\rightarrow U,\qquad \pi_X(z,w,\mu)=z,
$$
is a finite-sheeted holomorphic covering map and hence is locally biholomorphic. We pause to recall the following definition (see \cite[Chapter~I, Page~15]{Ha}). 

\begin{definition}\label{defn:regular}
Suppose that $Z\subseteq \C^N$ is a complex affine algebraic variety. A function $f$ on $Z$ is regular if, for every $p\in Z$, there exist
a Zariski-open neighborhood $V$ of $p$ in $Z$ and polynomials $P,Q\in\C[\xi_1,\ldots,\xi_N]$ such that $Q$ is nowhere zero on $V$ and
	\[
	f=\frac{P}{Q}\big|_V.
	\]
Equivalently, every regular function on $Z$ is the restriction to $Z$ of a polynomial on $\C^N$; see \cite[Chapter~I, Theorem~3.2]{Ha}.
We denote by $\cO(Z)^\times$ the group of units in $\cO(Z)$; that is,
	\[
	\cO(Z)^\times
	=\{f\in\cO(Z):1/f\in\cO(Z)\}.
	\]
If $\hat{Z} \subseteq \C^M$ is another complex affine algebraic variety, a map $\Phi:Z\to \hat{Z} $ is called regular if all its components
are regular functions on $Z$. An algebraic isomorphism is a bijective regular map whose inverse is also regular.

\end{definition}

\medskip

In particular, $\pi_X$ is a regular map from $X$ to $\C^n.$ Write $w=(w^{(1)},\cdots,w^{(m)})$, where $w^{(i)}\in\C^{N_i}$.
For each $1 \leq i \leq m $, the coordinate projections to $\C^{N_i}$ also define regular maps on $X,$ which are given by
$$
\widetilde F_i:X\rightarrow\C^{N_i},\qquad
\widetilde F_i(z,w,\mu)=w^{(i)}.
$$
Set $W:=W_0\cap U$, which is connected as $U$ is Zariski open in $\C^n$. The original map $F$ determines the holomorphic
section
\begin{equation}\label{eq:section-X}
s_X:W\rightarrow X,\qquad
s_X(z)=\left(z,F(z),\frac1{h(z)}\right).
\end{equation}
By construction,
$$
\pi_X\circ s_X=\operatorname{id},\qquad
\widetilde F_i\circ s_X=F_i, \quad 1 \leq i \leq m, \quad\hbox{on }W.
$$
Although the algebraic continuation may have several values at a point of $U$, the functions $\widetilde F_i$ are single-valued on $X$.

Apply the same construction to the  conjugate map $F^\#=(F_1^\#, \cdots, F_m^\#)$. More explicitly, let
$\mathcal X^\#:=\{(\zeta,\eta):(\overline\zeta, \overline\eta)\in\mathcal X\}$ and put
$$
U^\#:=\{\zeta\in\C^n:h^\#(\zeta)\neq0\},\qquad
Y:=\{(\zeta,\eta,\nu)\in\mathcal X^\#\times\C:
h^\#(\zeta)\nu=1\}.
$$
Then similarly, $Y$ is also a smooth irreducible complex affine algebraic variety of dimension $n$, and
$$
\pi_Y:Y\rightarrow U^\#,\qquad \pi_Y(\zeta,\eta,\nu)=\zeta,
$$
is a finite-sheeted holomorphic covering. The corresponding target coordinate projections define regular maps
$\widetilde F_i^\#:Y\to\C^{N_i}$. On $W^\#:=W_0\cap U^\#$, the original conjugate map $F^\#$ determines a holomorphic section $s_Y$ satisfying
$$
\pi_Y\circ s_Y=\operatorname{id},\qquad
\widetilde F_i^\#\circ s_Y=F_i^\#, \quad  1 \leq i \leq m, \quad \hbox{on }W^\#.
$$
The points $x\in X$ and $y\in Y$ will be treated as independent variables. The sections $s_X$ and $s_Y$ are defined on $W$ and $W^\#$, respectively. Both $W$ and $W^\#$ are dense, open, and connected subsets of $W_0$.

\subsection{The lifted differential form identity}

Write $\pi_X=(\pi_{X,1}, \cdots, \pi_{X,n}).$ On $X\times Y$, define
\begin{align}
\widetilde\Delta(x,y)
&:=1-\sum_{\alpha=1}^n
\pi_{X,\alpha}(x)\pi_{Y,\alpha}(y),
\label{eq:delta-tilde}\\
\widetilde R_i(x,y)
&:=\rho_i\bigl(\widetilde F_i(x),\widetilde F_i^\#(y)\bigr), ~~ 1 \leq i \leq m,
\label{eq:R-tilde}
\end{align}
with $x \in X$ and $y \in Y.$ They are regular functions on $X \times Y$. Let
$$
s:=s_X\times s_Y:W\times W^\#\rightarrow X\times Y.
$$
Then \(s\) is biholomorphic onto the open subset
\(s_X(W)\times s_Y(W^\#)\). By the definitions of $\widetilde\Delta$ and $\widetilde R_i$, together with \eqref{eq:Delta-R}, we have for $1 \leq i \leq m,$

$$
s^*\widetilde\Delta=\Delta,\qquad
s^*\widetilde R_i=R_i~~\text{on}~~W\times W^\#.
$$
In particular, as $\Delta$ and $R_i$ are nowhere vanishing on $W\times W^\#,$ we see that
$\widetilde\Delta$ and $\widetilde R_i$ are nowhere zero on \(s_X(W)\times s_Y(W^\#)\). Since pullback commutes with differentiation,
$$s^*\left(\frac{d\widetilde\Delta}{\widetilde\Delta}\right)=\frac{d\Delta}{\Delta},\qquad
s^*\left(\frac{d\widetilde R_i}{\widetilde R_i}\right)=\frac{dR_i}{R_i}, ~~ 1 \leq i \leq m.
$$
It follows from \eqref{eq:log-differential-base} that

\begin{equation}\label{eq:lifted-differential-identity}
	\frac{d\widetilde\Delta}{\widetilde\Delta}
	=\sum_{i=1}^m\lambda_i
	\frac{d\widetilde R_i}{\widetilde R_i},
	\quad \text{on}~~s_X(W)\times s_Y(W^\#).
\end{equation}
Both sides of the preceding identity are meromorphic one-forms on the irreducible complex manifold \(X\times Y\). 
By the identity principle, 
\begin{equation}\label{eq:log-differential}
	\frac{d\widetilde\Delta}{\widetilde\Delta}
	=\sum_{i=1}^m\lambda_i
	\frac{d\widetilde R_i}{\widetilde R_i} \quad \text{on}~~\left\{\widetilde\Delta\prod_i\widetilde R_i \neq 0\right\} \subseteq X\times Y.
\end{equation} 
Indeed, after multiplying \eqref{eq:lifted-differential-identity} by $\widetilde\Delta\prod_i\widetilde R_i$, the difference of 
two sides is a holomorphic one-form on $X \times Y$, vanishing on a nonempty open set. The identity theorem thus applies.

\section{The quotient functions are regular units}
\label{sec:incidence}

In this section, we first study the zero sets of the functions in \eqref{eq:delta-tilde} and \eqref{eq:R-tilde}. Set
\begin{equation}\label{eq:Sigma}
\Sigma:=\{(x,y)\in X\times Y:\widetilde\Delta(x,y)=0\}.
\end{equation}
Thus $\Sigma$ is the inverse image of the complexified sphere $1-z\cdot\zeta=0$ under $\pi_X\times\pi_Y$.
We next recall the following definition, and then prove a few preliminary results about $\Sigma$ and  the zero sets of $\widetilde R_i$.

\begin{definition}\label{defn:vanishing order}
Let $M$ be a (connected) complex manifold of dimension $n$, and let $E\subseteq M$ be an irreducible analytic hypersurface (so that its regular part $\mathrm{Reg}(E)$ is connected). Let $f$ be a nonzero meromorphic function on $M$. Choose any point $a\in \mathrm{Reg}(E)$ such that no irreducible zero or pole hypersurface of $f$, other than $E$ itself, passes through $a$. There exist local holomorphic coordinates
$(u,\eta)\in\C\times\C^{n-1}$ centered at $a$ such that $E=\{u=0\}$ locally. After shrinking the coordinate neighborhood, we can write
	\[
	f=u^k\psi,
	\]
where $k\in\mathbb Z$ and $\psi$ is holomorphic and nowhere zero. By the connectedness of $\mathrm{Reg}(E),$ the integer $k$ is independent of the choice of the point
$a$. It is also independent of the local coordinates, and the local defining function $u$. It is called the order of $f$ along $E$ and is denoted by
$\ord_E(f)$. A positive order indicates a zero along $E$, while a negative order indicates a pole.
\end{definition}

\begin{lemma}\label{prop:Sigma-irreducible}
The set $\Sigma$ is a smooth connected (and hence irreducible) hypersurface in $X\times Y$. Moreover, $\ord_\Sigma(\widetilde\Delta)=1.$
\end{lemma}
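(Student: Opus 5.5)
Smoothness and the order statement come from a direct computation; the substantive part is connectedness.

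\emph{Smoothness and order.} Since $\pi_X\times\pi_Y:X\times Y\to U\times U^\#$ is a local biholomorphism, it suffices to look at $\Delta(z,\zeta)=1-z\cdot\zeta$ on $\C^n\times\C^n$. Its differential is
\[
d\Delta=-\sum_\alpha(\zeta_\alpha dz_\alpha+z_\alpha d\zeta_\alpha),
\]
which vanishes only at $z=\zeta=0$, where $\Delta=1$. So $\{\Delta=0\}$ is a smooth hypersurface and $\Delta$ vanishes to first order along it. Pulling back by the local biholomorphism, $\widetilde\Delta$ has nowhere vanishing differential on $\Sigma$. Hence $\Sigma$ is a smooth hypersurface, $\widetilde\Delta$ is a local defining function for it, and, once irreducibility is known, $\ord_\Sigma(\widetilde\Delta)=1$. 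For a smooth hypersurface, connected and irreducible are equivalent, so it remains to show $\Sigma$ is connected.

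\emph{Connectedness.} I would show that $\Sigma$ is irreducible as an algebraic variety; since $\Sigma$ is smooth, irreducibility implies connectedness. Consider the projection $p:\Sigma\to Y$. For fixed $y$ with $\zeta=\pi_Y(y)\neq0$, the fiber is
\[
p^{-1}(y)=\pi_X^{-1}(H_\zeta\cap U),\qquad H_\zeta=\{z:\ z\cdot\zeta=1\}.
\]
Here $H_\zeta$ is an affine hyperplane in $\C^n$, and as $\zeta$ varies it ranges over all affine hyperplanes not through $0$.

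The key input is Bertini's irreducibility theorem applied to the dominant morphism $\pi_X:X\to\C^n$, where $X$ is irreducible of dimension $n\ge2$ and the linear system of hyperplanes has positive dimension. It gives a nonempty Zariski open set of $\zeta$ for which $\pi_X^{-1}(H_\zeta)$ is irreducible; its dimension is $n-1\ge1$, and this is exactly where $n\ge2$ is used. I would verify carefully that the version of Bertini invoked (e.g.\ Jouanolou, Thm.~6.3) applies to a morphism to affine space with hyperplanes not through a fixed point. Since the fibers $p^{-1}(y)$ are equidimensional of dimension $n-1$, and $X$ and $Y$ are smooth, the generic fiber of $p$ is irreducible.

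To pass from irreducible generic fibers to an irreducible total space, I would argue as follows. Since $\Sigma$ is smooth, its irreducible components coincide with its connected components and are disjoint, and each has dimension $2n-1$. Suppose $\Sigma$ had two components $\Sigma_1,\Sigma_2$. Each component dominates $Y$, because a component with non-dominant image would have dimension less than $2n-1$, given that the fibers have dimension $n-1$. Then over a general $y$, both $\Sigma_1\cap p^{-1}(y)$ and $\Sigma_2\cap p^{-1}(y)$ are nonempty, open and closed in the fiber $p^{-1}(y)$. This contradicts the connectedness of that fiber. Hence $\Sigma$ is connected.

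\emph{Main obstacle.} The delicate step is justifying Bertini for $\pi_X^{-1}(H_\zeta)$ inside the quasi-finite cover $X$. One must either check that $\pi_X$ is a morphism so that Bertini's theorem for morphisms applies, or pass to a projective compactification of $X$ and remove the boundary afterwards, noting that removing a proper closed subset preserves irreducibility.
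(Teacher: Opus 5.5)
Your argument is correct. Its key input is the same as the paper's: Bertini's irreducibility theorem, applied after composing a covering projection with $z\mapsto[1:z]\in\PP^n$, using that the hyperplanes $T_0-\zeta\cdot T=0$ form a dense open subset of the dual space. The paper projects $\Sigma$ to $X$ and applies Bertini to $\Phi_Y$; you project to $Y$ and apply it to $\pi_X$, which is the same by symmetry.

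The ``main obstacle'' you flag is not one. The theorem holds for an arbitrary morphism from an irreducible variety to $\PP^n$, with no properness or compactification needed. The hypothesis to check is $\dim\overline{\pi_X(X)}\ge 2$, not positivity of the dimension of the linear system, and that is exactly where $n\ge 2$ enters. Irreducibility of the general fiber then comes straight from Bertini, not from equidimensionality and smoothness.

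Where you genuinely differ is the passage from irreducible general fibers to connectedness of $\Sigma$.
\begin{itemize}
\item The paper works in the classical topology. Since $d_\zeta(1-z\cdot\zeta)\neq0$ on $\Sigma$, the projection $p$ is a submersion, hence open. An open surjection onto the connected set $X_G$ with connected fibers has connected total space $\Sigma_G$, and openness again shows $\Sigma_G$ is dense in $\Sigma$.
\item You use dimension theory instead. All fibers of $p$ have dimension at most $n-1$, so each component of the pure $(2n-1)$-dimensional $\Sigma$ must dominate $Y$. By Chevalley, a general fiber then meets two disjoint open-and-closed pieces, contradicting its irreducibility.
\end{itemize}
Your route needs no submersion or density argument, and no prior shrinking of $G$ to force nonempty fibers. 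It relies instead on the fiber-dimension theorem and Chevalley's theorem. The paper's route is more elementary, given the local coordinates it already has.

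One small addition: record that $\Sigma\neq\emptyset$, for instance because for general $\zeta$ the hyperplane $H_\zeta$ meets $U=\pi_X(X)$. The lemma, and in particular $\ord_\Sigma(\widetilde\Delta)=1$, is only meaningful for nonempty $\Sigma$.
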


\begin{proof}

To prove the irreducibility of $\Sigma$, consider the projection
\[
p:=\pr_X|_\Sigma:\Sigma\rightarrow X.
\]
For $x\in X$, write $z=\pi_X(x)$ and set
\[
\Sigma_x:=\{y\in Y:1-z\cdot\pi_Y(y)=0\}.
\]
Thus
\[
p^{-1}(x)=\{x\}\times\Sigma_x.
\]
We first show that $\Sigma_x$ is nonempty and irreducible for general $x\in X$. For that, we write $\pi_Y=(\pi_{Y,1}, \cdots, \pi_{Y,n})$ and
consider the regular map
\[
\Phi_Y:Y\rightarrow\PP^n,\qquad
\Phi_Y(y)=[1:\pi_{Y,1}(y):\cdots:\pi_{Y,n}(y)].
\]
Since $\pi_Y(Y)=U^\#$ is Zariski open and dense in $\C^n$, so is the image of $\Phi_Y$ in $\PP^n$. In particular, the image of $\Phi_Y$ has dimension $n\geq2$.

We use the following consequence of Bertini's irreducibility theorem: if a regular map from an irreducible complex affine algebraic variety to the projective space has image of dimension at least two, then the inverse image of a general hyperplane is irreducible. Here
``general'' means outside a proper algebraic subset of the space of hyperplanes. See \cite[Theorem~3.3.1, p.~207]{La}.

Let $[T_0:\cdots:T_n]$ be homogeneous coordinates on $\PP^n$. Every hyperplane in  $\PP^n$  whose coefficient of $T_0$ is nonzero can be written uniquely in the form
\[
H_z:=\{T_0-z_1T_1-\cdots-z_nT_n=0\},
\qquad \text{with}~ z=(z_1,\ldots,z_n)\in\C^n.
\]
These hyperplanes form a Zariski-open dense subset of the space of all hyperplanes in $\PP^n$. Bertini's theorem therefore gives a
nonempty Zariski-open subset $G\subseteq \C^n$ such that $\Phi_Y^{-1}(H_z)$ is irreducible for every $z\in G$. Moreover, after replacing
$G$ by a smaller nonempty Zariski-open subset if necessary, we may also assume that $\Phi_Y^{-1}(H_z)$ is nonempty for every $z\in G$. (Indeed,
recall  $U^\#$ is Zariski open in $\C^n.$ Then there exists a nonempty Zariski-open subset $G_1 \subseteq \C^n$ such that the affine hyperplane $\{\zeta\in\C^n:1-z\cdot\zeta=0\}$
meets $U^\#$ for every $z\in G_1$. Since $\pi_Y(Y)=U^\#$, it follows that $\Phi_Y^{-1}(H_z)$ is nonempty for every $z\in G_1$. Then we just need to replace $G$ by $G \cap G_1.$)

Since $G$ and $U$ are nonempty Zariski-open subsets of $\C^n$, so is $G\cap U$. Recall $\pi_X(X)=U.$ Then for any $z \in G \cap U,$  there exists $x\in X$ such that $z=\pi_X(x).$ For such an $x,$ we have
\[
\Sigma_x
=\{y\in Y:1-z\cdot\pi_Y(y)=0\}
=\Phi_Y^{-1}(H_z).
\]
Consequently, $\Sigma_x$ is nonempty and irreducible. 
This also shows that $\Sigma$ is nonempty.

We next show that $\Sigma$ is smooth. Write $z=\pi_X(x)$ and $\zeta=\pi_Y(y)$ for $x \in X$ and $y \in Y.$ Since
$\pi_X\times\pi_Y$ is locally biholomorphic, $(z,\zeta)$ are local
holomorphic coordinates on $X\times Y$. In the local coordinates $(z,\zeta)$, the hypersurface $\Sigma$ is
defined by $z\cdot\zeta=1$. Consequently, \(z\neq0\) on $\Sigma$. Therefore,
\begin{equation}\label{eq:nonvanishing-differential}
	d_\zeta(1-z\cdot\zeta)
	=-\sum_{\alpha=1}^n z_\alpha\,d\zeta_\alpha\neq0.
\end{equation}
It follows that $\Sigma$ is a smooth complex hypersurface of $X\times Y$. In particular, $\dim\Sigma=2n-1.$


We next prove that $\Sigma$ is connected. Fix $(x,y)\in\Sigma$ and choose $j$ such that $z_j\neq0$, where $z=\pi_X(x)$. Near
$(x,y)$, the equation defining $\Sigma$ can be solved for $\zeta_j$:
\[
\zeta_j
=\frac{1-\sum_{\alpha\neq j}z_\alpha\zeta_\alpha}{z_j}.
\]
Thus $(z,\zeta_1,\ldots,\widehat{\zeta_j},\ldots,\zeta_n)$ are local coordinates on $\Sigma$, and, in these coordinates, $p$ is the
projection onto the $z$-variables. It follows that $p$ is a holomorphic submersion and hence an open map. Set
\[
X_G:=\pi_X^{-1}(G \cap U),\qquad \Sigma_G:=p^{-1}(X_G).
\]
The set $X_G$ is a nonempty Zariski-open (and hence dense) subset of the irreducible variety $X$. It is therefore connected in the usual topology. For
every $x\in X_G$, putting $z=\pi_X(x)$ gives
\[
p^{-1}(x)
=\{x\}\times\Phi_Y^{-1}(H_z).
\]
By our choice of $G$, this fiber is nonempty and irreducible, and hence connected.

We claim that $\Sigma_G$ is connected. Suppose otherwise that
\[
\Sigma_G=P\cup Q,
\]
where $P$ and $Q$ are disjoint nonempty open subsets of $\Sigma_G$. Since every fiber of $p:\Sigma_G\to X_G$ is connected, no fiber $p^{-1}(x)$ can meet both $P$ and $Q$. Hence $p(P)$ and $p(Q)$ are disjoint. Since $p$ is an open map, they are nonempty open subsets of $X_G$.
Since every fiber $p^{-1}(x)$ is nonempty for $x \in X_G$, $p$  is surjective from $\Sigma_G$ onto $X_G$. Consequently,
\[
X_G=p(P)\cup p(Q).
\]
This contradicts the connectedness of $X_G$. Therefore, $\Sigma_G$ is connected.

We note that $\Sigma_G$ is dense in $\Sigma$. Indeed, if $V\subseteq \Sigma$ is a nonempty open subset, then $p(V)$
is a nonempty open subset of $X$. Since $X_G$ is dense in $X$, the set $p(V)$ meets $X_G$, and consequently $V$ meets $\Sigma_G$. Thus
$\overline{\Sigma_G}=\Sigma.$ Since $\Sigma$ is the closure of a connected set, it is also connected. As $\Sigma$ is a smooth connected complex algebraic variety, it is irreducible. 

Finally, it follows from \eqref{eq:nonvanishing-differential} that, in the local coordinates $(z,\zeta)$,  $1-z\cdot\zeta$ is a defining function of $\Sigma.$ Consequently, we have $\ord_\Sigma(\widetilde\Delta)=1.$ This completes the proof.
\end{proof}

\begin{lemma}\label{prop:common-divisor}
For every $i$, the function $\widetilde R_i$ vanishes identically on $\Sigma$. Consequently,
$$
k_i:=\ord_\Sigma(\widetilde R_i)
$$
is a positive integer.
\end{lemma}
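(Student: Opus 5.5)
Since $\Sigma$ is irreducible by Lemma~\ref{prop:Sigma-irreducible}, it suffices to show that $\widetilde R_i$ vanishes on a nonempty relatively open subset of $\Sigma$. The identity principle on the connected manifold $\Sigma$ then gives $\widetilde R_i|_\Sigma\equiv0$. Since $\widetilde R_i$ is regular, hence holomorphic on $X\times Y$, and not identically zero (it equals $1$ near $s(0,0)$ when $0\in W\cap W^\#$, or is at least nonvanishing on $s_X(W)\times s_Y(W^\#)$), the order $k_i=\ord_\Sigma(\widetilde R_i)$ is then a well-defined positive integer.

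To produce such an open subset, I would use the boundary behavior of the original map. By \cite[Theorem~1.1(a)]{Xiao2022}, each nonconstant component $F_i$ sends boundary points of $\B^n$ into $\partial\Omega_i$, at least near a generic boundary point $p\in\partial\B^n$ where $F$ extends holomorphically. Since $F$ is algebraic and proper, it extends holomorphically across a nonempty open piece $\Gamma\subseteq\partial\B^n$, and after shrinking $\Gamma$ we may assume $\Gamma\subseteq U\cap(U^\#)^\#$. By \eqref{eq:generic-norm-boundary}, this gives $\rho_i(F_i(z),\overline{F_i(z)})=0$ for $z\in\Gamma$.

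Now choose a path in $U$ from a point of $W$ to a point of $\Gamma$ inside $\B^n$; this is possible because $\B^n\cap U$ is connected, being the complement of an analytic set in a ball. Analytic continuation of the section $s_X$ along this path gives a local holomorphic section $\sigma_X$ of $\pi_X$ near $p$ with $\widetilde F_i\circ\sigma_X=F_i$. Indeed, $F$ itself is holomorphic on all of $\B^n$, so $s_X$ extends to $\B^n\cap U$ and near $p$ by the same formula. Similarly, let $\sigma_Y$ be the section near $\bar p$ with $\widetilde F_i^\#\circ\sigma_Y=F_i^\#$. Then the function $(z,\zeta)\mapsto\widetilde R_i(\sigma_X(z),\sigma_Y(\zeta))=\rho_i(F_i(z),F_i^\#(\zeta))$ is holomorphic near $(p,\bar p)$ and vanishes on the totally real set $\{(z,\bar z):z\in\Gamma\}$, which has real dimension $2n-1$.

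The key step, and the main obstacle, is to pass from vanishing on this totally real set to vanishing on the complex hypersurface $\{z\cdot\zeta=1\}$ near $(p,\bar p)$. This is the standard polarization argument. The set $\{(z,\bar z):|z|=1\}$ is a maximally totally real submanifold of $\{z\cdot\zeta=1\}$: it has real dimension $2n-1$ inside a complex manifold of complex dimension $2n-1$. A holomorphic function on the complexified sphere near $(p,\bar p)$ that vanishes on this maximally totally real submanifold must vanish identically there. Pushing forward by the local biholomorphism $\sigma_X\times\sigma_Y$, we conclude that $\widetilde R_i$ vanishes on an open piece of $\Sigma$. One point needs care: the path continuation must keep the points in $X$ and $Y$, i.e.\ we need $\Gamma\subseteq U$ and $\bar\Gamma\subseteq U^\#$. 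This holds because $A\cap\partial\B^n$ is nowhere dense in the sphere, as a proper real-analytic subset.
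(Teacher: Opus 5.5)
Your proposal is correct and follows essentially the same route as the paper: both invoke \cite[Theorem~1.1(a)]{Xiao2022} on a boundary piece avoiding $A$, and both use the fact that the lifted points over that piece form a maximally totally real submanifold of $\Sigma$ on which $\widetilde R_i$ vanishes. Both then conclude with the identity principle on the connected manifold $\Sigma$. The path-continuation step is unnecessary: since $\mathcal X$ is closed, the points $\bigl(q,F(q),1/h(q)\bigr)$ lie in $X$ directly for $q$ in that boundary piece, which is how the paper proceeds.
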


\begin{proof}
As explained in $\S$\ref{sec:introduction} and at the beginning of $\S$\ref{sec:section1}, we may assume that $F$ is defined on the whole ball $\B^n$ and algebraic.  We may thus choose a small connected open piece $S\subset\partial\B^n$ such that $F$ extends holomorphically across $S$. We continue to denote this extension by $F$. By \cite[Theorem~1.1(a)]{Xiao2022},
$F_i(S)\subseteq \partial\Omega_i, 1\leq i\leq m.$ Shrinking $S$ if needed, we can assume $S \cap A=\emptyset,$ where $A$ is the complex variety in \eqref{eq:A-U}. For each $q\in S$, the map $F$ determines a point $x(q)\in X$ characterized by
\[
\pi_X(x(q))=q,\qquad
\widetilde F_i(x(q))=F_i(q),\quad 1\leq i\leq m.
\]
Likewise, the conjugate branch $F^\#$ determines a point $y(\overline q)\in Y$. More precisely,
in the affine coordinates used in the construction of $X$ and $Y$, these points are given by
\[
x(q)=\left(q,F(q),\frac{1}{h(q)}\right),\qquad
y(\overline q)
=\left(\overline q,\overline{F(q)},
\frac{1}{\overline{h(q)}}\right).
\]
Since $1-q\cdot\overline q=0$, the subset
\[
M:=\{(x(q),y(\overline q)):q\in S\}
\]
of $X\times Y$ is contained in $\Sigma$. Again, since $\pi_X\times\pi_Y$ is locally biholomorphic, the functions
\[
z=\pi_X(x),\qquad \zeta=\pi_Y(y)
\]
form local holomorphic coordinates on $X\times Y$ near each point of $M$. Along $M$, we have $z=q$ and $\zeta=\overline q$ for
$q\in S$. Thus, in these coordinates, $M$ is locally an open piece of the real submanifold defined by
\[
\zeta=\overline z,\qquad |z|^2=1.
\]
Thus $M$ is a real analytic maximally totally real submanifold of $\Sigma$.

For $q\in S$, we have
\[
\widetilde R_i(x(q),y(\overline q))
=\rho_i(F_i(q),\overline{F_i(q)})=0
\]
by \eqref{eq:generic-norm-boundary}. Since $M$ is maximally totally real in $\Sigma$, the uniqueness principle implies that
$\widetilde R_i|_\Sigma$ vanishes in a neighborhood of $M$. As $\Sigma$ is connected, the identity theorem gives
$\widetilde R_i|_\Sigma\equiv0.$
Since $\widetilde R_i$ is not identically zero on $X\times Y$, its order of vanishing along $\Sigma$ is finite and positive.
\end{proof}

We next prove  the quotient of $\widetilde R_i$ and $\widetilde\Delta^{k_i}$ is a regular unit on $X \times Y.$

\begin{proposition}\label{prpn:Q-unit}
For every $1 \leq i \leq m$, letting $k_i$ be as in Lemma~\ref{prop:common-divisor}, the quotient
	\begin{equation}\label{eq:def-Q}
		Q_i:=\frac{\widetilde R_i}{\widetilde\Delta^{k_i}}
	\end{equation}
and its reciprocal extend to regular functions on $X\times Y$. In particular,
	\begin{equation}\label{eq:Q-unit}
		Q_i\in\cO(X\times Y)^\times.
	\end{equation}
\end{proposition}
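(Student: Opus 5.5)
The plan is to show that $Q_i$ has neither zeros nor poles along any irreducible hypersurface of $X\times Y$, and then to use normality to conclude regularity. Since $X\times Y$ is a smooth irreducible affine variety, a rational function with trivial divisor is a regular unit. Indeed, by Hartogs/algebraic Hartogs on the smooth (hence normal) variety $X\times Y$, a rational function with no poles along hypersurfaces is regular; applying this to $Q_i$ and to $1/Q_i$ gives \eqref{eq:Q-unit}. The whole task is therefore to compute $\ord_E(Q_i)=\ord_E(\widetilde R_i)-k_i\ord_E(\widetilde\Delta)$ for every irreducible hypersurface $E$.

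For $E=\Sigma$, this order is $0$ by Lemma~\ref{prop:common-divisor} and Lemma~\ref{prop:Sigma-irreducible}, which give $\ord_\Sigma(\widetilde R_i)=k_i$ and $\ord_\Sigma(\widetilde\Delta)=1$. For $E\neq\Sigma$ we have $\ord_E(\widetilde\Delta)=0$, because $\Sigma$ is the full zero set of $\widetilde\Delta$. It therefore remains to show that no $\widetilde R_i$ vanishes along any hypersurface $E\neq\Sigma$; note that $\widetilde R_i$ is regular, so it has no poles.

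The tool for this is the residue of the identity \eqref{eq:log-differential}. Fix an irreducible hypersurface $E\subseteq X\times Y$ and a general point $a\in\mathrm{Reg}(E)$ avoiding all other zero components of $\widetilde\Delta\prod_j\widetilde R_j$. Take a small holomorphic disk $\gamma$ transverse to $E$ at $a$, and integrate both sides of \eqref{eq:log-differential} over a small circle around $a$ in $\gamma$. Since $d\log f$ has residue $\ord_E(f)$, this gives
\[
\ord_E(\widetilde\Delta)=\sum_{j=1}^m\lambda_j\,\ord_E(\widetilde R_j).
\]
If $E\neq\Sigma$, the left side is $0$. Each $\ord_E(\widetilde R_j)\ge0$ and each $\lambda_j>0$, so every $\ord_E(\widetilde R_j)=0$. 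This is the key positivity step.

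Taking $E=\Sigma$ in the same residue identity also yields $\sum_j\lambda_jk_j=1$, which is consistent and will be used later. Hence $\operatorname{div}(Q_i)=0$.

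I expect the main obstacle to be justifying the passage from ``trivial divisor'' to ``regular unit''. This requires smoothness of $X\times Y$, which holds because both factors are smooth, together with the standard fact that on a normal variety a rational function regular off codimension two is regular. I would cite Hartshorne's treatment of normal varieties, or argue analytically via Riemann extension plus the algebraic nature of $Q_i$. A smaller technical point is making sure that the residue computation is legitimate at generic points of $E$, where $E$ and $\gamma$ are as in Definition~\ref{defn:vanishing order}. This is routine once we choose local coordinates $(u,\eta)$ with $E=\{u=0\}$ and write each function as $u^{k}\psi$, so that the residue of $d\log(u^{k}\psi)$ along $\{\eta=\text{const}\}$ is $k$.
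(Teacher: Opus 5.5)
Your proposal is correct and follows the paper's proof essentially step for step. The residue of \eqref{eq:log-differential} along a transverse disk, together with positivity of the $\lambda_j$, rules out any zero hypersurface of $\widetilde R_i$ other than $\Sigma$; Lemmas~\ref{prop:Sigma-irreducible} and \ref{prop:common-divisor} then make the divisor of $Q_i$ trivial, and the algebraic Hartogs property of the smooth variety $X\times Y$ gives regularity of $Q_i$ and $1/Q_i$. Your only deviation is cosmetic: you state the identity $\ord_E(\widetilde\Delta)=\sum_j\lambda_j\ord_E(\widetilde R_j)$ for every $E$ instead of arguing by contradiction, which has the side benefit that $E=\Sigma$ gives $\sum_j\lambda_jk_j=1$ directly, whereas the paper derives it at the end from the metric identity.
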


\begin{proof}
We next show that the only irreducible zero hypersurface of each $\widetilde R_i$ in $X\times Y$ is $\Sigma$. Suppose, to the contrary, that $\widetilde R_{i_0}$ vanishes along an irreducible hypersurface $E\neq\Sigma$ in $X\times Y$ for some $1\leq i_0\leq m$. For each $i$, set
$$
l_i:=\ord_E(\widetilde R_i).
$$
Since each $\widetilde R_i$ is regular, we have $l_i\geq0$ for every $i$, while $l_{i_0}\geq1$. Choose a smooth point $p^*\in E\setminus\Sigma$ such that no irreducible zero hypersurface of any $\widetilde R_i$, other than $E$ itself, passes through $p^*$. There exist local holomorphic coordinates
$$
(u,v_1,\ldots,v_{2n-1})
$$
on $X\times Y$, centered at $p^*$, such that $E=\{u=0\}$. After shrinking the coordinate neighborhood, we may write
\begin{equation}\label{eq:local-factorization-E}
	\widetilde\Delta=b,\qquad
	\widetilde R_i=u^{l_i}b_i,\quad 1\leq i\leq m,
\end{equation}
where $b$ and all the $b_i$ are holomorphic and nowhere zero in the neighborhood. Let $\gamma:(\mathbb D_\epsilon,0)\to(X\times Y,p^*)$ be the
holomorphic disk transverse to $E$ at $p^*$ given, in the above local coordinates, by

$$
\gamma(\tau)=(\tau,0,\ldots,0),\qquad \tau \in \mathbb D_\epsilon:=\{\tau \in \C: |\tau|<\epsilon\},
$$
where $\epsilon>0$ is sufficiently small.
Put
$$
\delta(\tau):=\widetilde\Delta(\gamma(\tau)),\qquad
r_i(\tau):=\widetilde R_i(\gamma(\tau)).
$$
They are holomorphic functions on $D_\epsilon.$  By \eqref{eq:local-factorization-E}, $\delta(\tau) \neq 0$ in $D_\epsilon;$ and $r_i(\tau) \neq 0$ for $\tau \neq 0$ in $D_\epsilon.$   Pulling back \eqref{eq:log-differential} to the punctured disk gives
\begin{equation}\label{eq:disk-log-E}
	\frac{\delta'(\tau)}{\delta(\tau)}\,d\tau
	=
	\sum_{i=1}^m\lambda_i
	\frac{r_i'(\tau)}{r_i(\tau)}\,d\tau, \quad \text{for}~0 \neq \tau  \in D_\epsilon.
\end{equation}
Since $\delta(0)\neq0$, the one-form on the left has residue zero at $\tau=0$. By \eqref{eq:local-factorization-E} again, for each $1 \leq i \leq m,$ there is some holomorphic function $c_i$ such that
$$
r_i(\tau)=\tau^{l_i}c_i(\tau),\qquad c_i(0)\neq0.
$$
Hence

$$
\operatorname{Res}_{\tau=0}
\left(\frac{r_i'(\tau)}{r_i(\tau)}\,d\tau\right)=l_i.
$$
Taking residues of both sides of \eqref{eq:disk-log-E} at $\tau=0$ therefore gives $\sum_{i=1}^m\lambda_il_i=0.$
This is impossible because every $\lambda_i$ is positive, every $l_i$ is nonnegative, and $l_{i_0}\geq1$. Thus $\Sigma$ is the only irreducible zero hypersurface of each $\widetilde R_i$.

It follows that $Q_i$ has neither zeros nor poles along any irreducible hypersurface of $X\times Y$. Indeed, along $\Sigma$, the orders of its numerator and denominator are both $k_i$, while away from $\Sigma$, neither the numerator nor the denominator has a zero hypersurface. Likewise for $1/Q_i.$ Since $X\times Y$ is smooth, a rational function having no poles along any irreducible hypersurface is regular (see, e.g., \cite[Chapter~3, pp.~149--150]{Sh}). Applying this to $Q_i$ and $1/Q_i$, we conclude that both are regular, and \eqref{eq:Q-unit} follows immediately.
\end{proof}

To conclude this section, we use Mok's square-root map \cite{Mok2012} to illustrate why the above argument fails when $n=1$.

\begin{remark}\label{rem:square-root}
To describe Mok's square-root map \cite{Mok2012} explicitly,  it is convenient to use the Poincar\'e  upper half-plane model $\mathbb H$, which is biholomorphic to the unit disk. In this model, Mok's square-root map is
\[F(\tau)=\bigl(\sqrt{\tau},\ii\sqrt{\tau}\bigr),\qquad \tau\in\mathbb H,\]
where $\sqrt{\tau}$ denotes the branch of the square root satisfying $0<\arg\sqrt{\tau}<\pi/2$. The map $F$ is a holomorphic isometric embedding from $\mathbb H$ to $\mathbb H^2,$ although neither of its components is a holomorphic isometry (see \cite{Mok2012}). The Zariski closure of the graph of $F$ is
	\[
	\mathcal X
	=
	\{(\tau,w_1,w_2)\in\C^3:
	w_1^2=\tau,\ w_2=\ii w_1\}.
	\]
The projection $\pi:\mathcal X\to\C$ is given by $s \to s^2$ and is ramified at $s=0$. We may therefore take $h(\tau)=\tau$ and $U=\C^*$. The affine variety constructed in \eqref{eq:X-affine} is
	\[
	X=
	\{(\tau,w_1,w_2,\mu)\in\C^4:
	w_1^2=\tau,\ w_2=\ii w_1,\ \tau\mu=1\}.
	\]
Note $X$ is algebraically isomorphic to $\C^*$ via the parametrization $s \to \left(s^2,s,\ii s,\frac{1}{s^2}\right) \in X,$ with $s \in \C^*.$ Similarly, the conjugate cover is
	\[
	Y=
	\{(\sigma,\eta_1,\eta_2,\nu)\in\C^4:
	\eta_1^2=\sigma,\ \eta_2=-\ii\eta_1,\ \sigma\nu=1\},
	\]
which is parametrized by $t \to \left(t^2,t,-\ii t,\frac{1}{t^2}\right) \in Y,$ with $t\in\C^*.$ In particular, with the above parametrizations, on $X\times Y$ we have $\tau=s^2$ and  $\sigma=t^2,$ with $s,t\neq0.$
	
The complexified boundary equation of $\mathbb H$ is $\tau-\sigma=0$. Under the above parametrizations, its inverse image in $X\times Y$ is therefore given by
	\[
	\Sigma
	=\{s^2-t^2=0\}
	=\Sigma_+\sqcup\Sigma_-,
	\qquad
	\Sigma_+:=\{s=t\},\quad
	\Sigma_-:=\{s=-t\}, \quad s,t\neq0.
	\]
The two components are disjoint because $s,t\neq0$. Thus $\Sigma$ is reducible and disconnected, and Lemma~\ref{prop:Sigma-irreducible} fails in this example.
	
In the upper half-plane model, the analogues of the polarized functions in \eqref{eq:delta-tilde} and \eqref{eq:R-tilde} are given, up to nonzero constant factors, by
	\[
	\widetilde\Delta=s^2-t^2,\qquad
	\widetilde R_1=s-t,\qquad
	\widetilde R_2=s+t.
	\]
Hence $\widetilde R_1$ vanishes on $\Sigma_+$ and is nowhere zero on $\Sigma_-$, whereas $\widetilde R_2$ vanishes on $\Sigma_-$ and is nowhere zero on $\Sigma_+$. In particular, neither $\widetilde R_1$ nor $\widetilde R_2$ vanishes identically on all of $\Sigma$, so the conclusion of Lemma~\ref{prop:common-divisor} fails. Indeed, since $\Sigma$ is
reducible, the order $\ord_\Sigma(\widetilde R_i)$ is not defined by Definition~\ref{defn:vanishing order}. Moreover, note $\widetilde R_1/\widetilde\Delta=1/(s+t),$ which has a pole along $\Sigma_-$. Similarly, $\widetilde R_2/\widetilde\Delta=1/(s-t)$ has a pole along $\Sigma_+$. Thus these quotients are not regular units, and hence Proposition~\ref{prpn:Q-unit} fails as well.
\end{remark}

\section{Proof of Theorem~\ref{thm:main}}\label{sec:product-unit}

A general form of the Rosenlicht product-unit theorem was proved by Conrad \cite[Theorem~1.1]{ConradUnits}. In the present paper, we need only the following special case for smooth irreducible complex affine algebraic varieties. For the reader's convenience and to make the paper self-contained, we include a proof using complex analytic arguments on compact complex manifolds.

\begin{lemma}[A special case of Rosenlicht's product-unit theorem; see Conrad {\cite[Theorem~1.1]{ConradUnits}}]
	\label{lem:RC}
Let $X$ and $Y$ be nonempty smooth irreducible complex affine algebraic varieties. If $Q\in\cO(X\times Y)^\times$, then
\begin{equation}\label{eq:Conrad-factor}
Q(x,y)=a(x)b(y),\qquad
a\in\cO(X)^\times,\quad b\in\cO(Y)^\times.
\end{equation}
The factors are unique up to replacing $(a,b)$ by $(ca,c^{-1}b)$ for a nonzero constant $c$. In particular, for any $x_0\in X$ and $y_0\in Y$,
\begin{equation}\label{eq:rectangle-identity}
Q(x,y)Q(x_0,y_0)=Q(x,y_0)Q(x_0,y).
\end{equation}
\end{lemma}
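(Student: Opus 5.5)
The strategy is to compactify both factors and compare the divisor of $Q$ with the boundary divisors. By Nagata compactification and Hironaka's resolution of singularities, I will choose smooth projective varieties $\bar X\supseteq X$ and $\bar Y\supseteq Y$ such that the complements $\bar X\setminus X=D_1\cup\cdots\cup D_r$ and $\bar Y\setminus Y=E_1\cup\cdots\cup E_s$ are divisors whose irreducible components $D_j$ and $E_k$ are prime divisors. Then $\bar X\times\bar Y$ is a smooth projective (in particular compact, connected) complex manifold containing $X\times Y$ as a Zariski-open subset. Its complement is the union of the irreducible hypersurfaces $D_j\times\bar Y$ and $\bar X\times E_k$; these are irreducible because products of irreducible varieties over $\C$ are irreducible. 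Since $Q$ is a regular unit on $X\times Y$, it is a rational function on $\bar X\times\bar Y$ whose zeros and poles lie only on these hypersurfaces. Hence
\[
\operatorname{div}(Q)=\sum_j a_j\,(D_j\times\bar Y)+\sum_k b_k\,(\bar X\times E_k),\qquad a_j,b_k\in\mathbb Z,
\]
where the coefficients are the orders of Definition~\ref{defn:vanishing order}.

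Next I fix a point $y_0\in Y$ and set $a(x):=Q(x,y_0)$, which lies in $\cO(X)^\times$ because restricting a unit to the slice $X\times\{y_0\}$ gives a unit. I will show that $\operatorname{div}(a)=\sum_j a_jD_j$ on $\bar X$. The only subtle point is that restricting a divisor to a slice requires the slice not to lie inside its support. Here $\bar X\times\{y_0\}$ meets no $\bar X\times E_k$, and it meets each $D_j\times\bar Y$ transversally along $D_j\times\{y_0\}$. So near a general point of $D_j\times\{y_0\}$ I can write $Q=u^{a_j}\psi$, with $u$ a local defining function of $D_j$ pulled back from $\bar X$ and $\psi$ a nowhere-zero holomorphic function, and restricting to the slice gives the claim. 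In the same way, for a fixed $x_0\in X$, the function $b_1(y):=Q(x_0,y)$ satisfies $\operatorname{div}(b_1)=\sum_k b_kE_k$.

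Now consider the rational function $\Phi(x,y):=Q(x,y)\,Q(x_0,y_0)/\bigl(Q(x,y_0)Q(x_0,y)\bigr)$ on $\bar X\times\bar Y$. Its divisor is zero, since pulling back divisors along the projections $\bar X\times\bar Y\to\bar X$ and $\bar X\times\bar Y\to\bar Y$ is additive. Because $\bar X\times\bar Y$ is smooth, a rational function with zero divisor has no poles, and therefore (by the same normality fact cited in Proposition~\ref{prpn:Q-unit}) it is holomorphic on the whole compact connected manifold. So it is constant, and evaluating at $(x_0,y_0)$ shows that the constant is $1$. This proves \eqref{eq:rectangle-identity}. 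Then \eqref{eq:Conrad-factor} follows with $a(x)=Q(x,y_0)$ and $b(y)=Q(x_0,y)/Q(x_0,y_0)$, both of which are units.

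For uniqueness, suppose $a(x)b(y)=a'(x)b'(y)$. Then $a/a'=b'/b$, and this function depends only on $x$ and only on $y$, so it is a nonzero constant $c$. The main obstacle is the first step: producing smooth compactifications with a clean description of the boundary, and justifying that the divisor of $Q$ restricts correctly to the slices.
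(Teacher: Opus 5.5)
Your proposal is correct and follows essentially the same route as the paper. Both arguments run as follows: compactify to smooth projective $\overline X,\overline Y$, and show that $\operatorname{div}(Q)$ is supported on the hypersurfaces $D_j\times\overline Y$ and $\overline X\times E_l$. Then compute the divisors of the slice functions $Q(\cdot,y_0)$ and $Q(x_0,\cdot)$ by the local factorization near $(p,y_0)$, and conclude that $Q(x,y)Q(x_0,y_0)/\bigl(Q(x,y_0)Q(x_0,y)\bigr)$ is holomorphic on the compact connected manifold $\overline X\times\overline Y$, hence identically $1$. The differences are only cosmetic: you invoke Nagata where a projective closure suffices, and you normalize $b$ by $Q(x_0,y_0)$ at the outset rather than at the end.
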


\begin{proof}
If either variety is a point, there is nothing to prove. Otherwise, let $X^c$ and $Y^c$ be projective closures of $X$ and $Y$, respectively. By resolution of singularities, there exist smooth projective varieties $\overline X$ and $\overline Y$ together with projective birational morphisms
$$
\mu_X:\overline X\longrightarrow X^c,\qquad
\mu_Y:\overline Y\longrightarrow Y^c,
$$
which are isomorphisms over $X$ and $Y$, respectively. Identifying $X$ and $Y$ with their inverse images, the spaces $\overline X$ and $\overline Y$ are compact connected complex manifolds, and $X$ and $Y$ are Zariski-open subsets of $\overline X$ and $\overline Y$, respectively. Moreover, the rational function $Q$ extends meromorphically to $\overline X\times\overline Y$.


Let $D_1,\cdots,D_r$ be the irreducible hypersurface components of $\overline X\setminus X$, and let $E_1,\cdots,E_s$ be those of $\overline Y\setminus Y$. Since $Q$ and $1/Q$ are regular on $X\times Y$, every zero or pole hypersurface of the extended function is contained in
$$
\bigl((\overline X\setminus X)\times\overline Y\bigr)
\cup\bigl(\overline X\times(\overline Y\setminus Y)\bigr).
$$
By irreducibility and dimension considerations, every irreducible hypersurface in the support of $\operatorname{div}(Q)$ is of the form
$D_j\times\overline Y$ or $\overline X\times E_l$. Therefore,
\begin{equation}\label{eq:vertical-horizontal-divisor}
	\operatorname{div}(Q)
	=\sum_{j=1}^r m_j(D_j\times\overline Y)
	+\sum_{l=1}^s n_l(\overline X\times E_l),
\end{equation}
where $m_j=\ord_{D_j\times\overline Y}(Q)$ and $n_l=\ord_{\overline X\times E_l}(Q).$ Here a positive coefficient represents a zero of $Q$, while a negative coefficient represents a pole. Fix $x_0\in X$ and $y_0\in Y$, and set

$$
a(x):=Q(x,y_0),\qquad b(y):=Q(x_0,y).
$$
Since $Q\in\cO(X\times Y)^\times$, we have
$
a\in\cO(X)^\times$ and $b\in\cO(Y)^\times.
$
Regarded as rational functions, $a$ and $b$ extend meromorphically to $\overline X$ and $\overline Y$, respectively. Note that every zero or pole hypersurface of the extended functions
$a$ and $b$ is contained in $\overline X\setminus X$ and $\overline Y\setminus Y$, respectively.  We claim that
\begin{equation}\label{eq:slice-orders}
	\operatorname{div}_{\overline X}(a)=\sum_{j=1}^r m_jD_j,
	\qquad
	\operatorname{div}_{\overline Y}(b)=\sum_{l=1}^s n_lE_l.
\end{equation}
We verify the first identity. Fix $j$ and choose a smooth point $p\in D_j$ lying on no other irreducible hypersurface component of $\overline X\setminus X$. Let $u$ be a local holomorphic coordinate defining $D_j$ near $p$. Since $y_0\in Y$, no hypersurface $\overline X\times E_l$ passes through $(p,y_0)$. Thus, by \eqref{eq:vertical-horizontal-divisor}, the only possible zero or pole hypersurface of $Q$ near $(p,y_0)$ is $D_j\times\overline Y$, along which $Q$ has order $m_j$. By Definition~\ref{defn:vanishing order}, we therefore have the local factorization
$$
Q(x,y)=u(x)^{m_j}v(x,y),
$$
where $v$ is holomorphic and nowhere zero near $(p,y_0)$. Restricting to $y=y_0$, we obtain
$$
a(x)=u(x)^{m_j}v(x,y_0),
$$
and hence $\ord_{D_j}(a)=m_j$. Since $a$ has neither zeros nor poles on $X$, and the $D_j$ exhaust the irreducible hypersurfaces contained in $\overline X\setminus X$, this proves the first identity in \eqref{eq:slice-orders}.
The second follows in the same way by restricting to $x=x_0$. It follows from \eqref{eq:vertical-horizontal-divisor} and \eqref{eq:slice-orders} that
\begin{equation}\label{eq:H-definition}
	H(x,y):=\frac{Q(x,y)Q(x_0,y_0)}{a(x)b(y)}
\end{equation}
has neither zeros nor poles along any hypersurface of $\overline X\times\overline Y$. By the same argument used at the end of the proof of Proposition~\ref{prpn:Q-unit}, since $\overline X\times\overline Y$ is smooth, it follows that $H$ is regular (and hence holomorphic) on the compact connected complex manifold $\overline X\times\overline Y$. By the maximum principle, $H$ is constant. Its value at $(x_0,y_0)$ is one, and thus $H \equiv 1.$ Then \eqref{eq:H-definition} yields \eqref{eq:rectangle-identity} and, after absorbing the nonzero constant \(Q(x_0,y_0)^{-1}\) into \(b\), the factorization \eqref{eq:Conrad-factor}.

Finally, if $a(x)b(y)=\widehat a(x)\widehat b(y)$, then $\widehat a(x)/a(x)=b(y)/\widehat b(y)$. Fixing either variable shows that both sides equal a nonzero constant. This proves the uniqueness assertion. 
\end{proof}

We now complete the proof of the main theorem.

\begin{proof}[Proof of Theorem~\ref{thm:main}] 
We first assume that all the components $F_i$ are nonconstant. Retain the notation of Sections~\ref{sec:section1} and
\ref{sec:incidence}. Let $1 \leq i \leq m.$ By Proposition~\ref{prpn:Q-unit}, each $Q_i=\widetilde R_i/\widetilde\Delta^{k_i}$ belongs to
$\cO(X\times Y)^\times$. Lemma~\ref{lem:RC} therefore applies to $Q_i$.

Recall $W_0$ is defined in $\S$\ref{subsec: subsec21}. On the neighborhood $W_0\times W_0$ of $(0, 0)$, we define
\begin{equation}\label{eq:def-q-i}
	q_i(z,\zeta):=
	\frac{R_i(z,\zeta)}{\Delta(z,\zeta)^{k_i}}, \quad  1 \leq i \leq m.
\end{equation}
This function is holomorphic, since $\Delta$ does not vanish there. Recall  $W$ and  $W^\#$ are defined in $\S$\ref{subsec: subsec22}. For $z\in W$ and $\zeta\in W^\#$, by the definition \eqref{eq:def-Q} of $Q_i,$
$$
q_i(z,\zeta)=Q_i(s_X(z),s_Y(\zeta)).
$$
By \eqref{eq:rectangle-identity},
\begin{equation}\label{eq:local-rectangle}
q_i(z,\zeta)q_i(z',\zeta')
=q_i(z,\zeta')q_i(z',\zeta)
\end{equation}
whenever $z,z'\in W$ and $\zeta,\zeta'\in W^\#$. Both sides are holomorphic for all four variables in $W_0$. Since $W$ and $W^\#$ are dense in $W_0$, the identity holds for all $z,z',\zeta,\zeta'\in W_0$. By \eqref{eq:normalization}, \eqref{eq:generic-norm-axes} and \eqref{eq:Delta-R},
$$
q_i(z,0)=q_i(0,\zeta)=q_i(0,0)=1.
$$
Setting $z'=\zeta'=0$ in \eqref{eq:local-rectangle} gives $q_i(z,\zeta)=1,$ for $ z,\zeta \in W_0$. Substituting this into \eqref{eq:def-q-i}, we obtain
$$
\rho_i(F_i(z),F_i^\#(\zeta))=(1-z\cdot\zeta)^{k_i}
\quad\hbox{on }W_0\times W_0.
$$
Setting $\zeta=\overline z$ and using real analyticity, we obtain
$$
\rho_i(F_i(z),\overline{F_i(z)})=(1-|z|^2)^{k_i}
\quad\hbox{on }\B^n.
$$
The above equation immediately yields the properness of $F_i.$ Applying $-\ii\partial\bar\partial\log$ to this equation gives $F_i^*\omega_{\Omega_i}=k_i\omega_{\B^n}$, proving \eqref{eq:component-isometry}. Bringing \eqref{eq:component-isometry} into \eqref{eq:metric-identity}, we conclude that $\sum_{i=1}^m\lambda_i k_i=1$. In the general case, restoring the constant components with $k_i=0$ proves Conjecture~\ref{conj1}.
\end{proof}

\end{document}